\title[Derived equivalence]
{$G_2$-Grassmannians and
derived equivalences}
\author[K.~Ueda]{Kazushi Ueda}
\address{
Graduate School of Mathematical Sciences,
The University of Tokyo,
3-8-1 Komaba,
Meguro-ku,
Tokyo,
153-8914,
Japan.}
\email{kazushi@ms.u-tokyo.ac.jp}
\date{}
\begin{document}

\maketitle

\begin{abstract}
We prove the derived equivalence
of a pair of non-compact Calabi--Yau 7-folds,
which are the total spaces of certain rank 2 bundles
on $G_2$-Grassmannians.
The proof follows that of the derived equivalence of Calabi--Yau 3-folds
in $G_2$-Grassmannians
by Kuznetsov
\cite{1611.08386}
closely.
\end{abstract}

\section{Introduction}


The simply-connected simple algebraic group $G$ of type $G_2$
has three homogeneous spaces
$\G \coloneqq G/P_1$,
$\Q \coloneqq G/P_2$,
and
$\F \coloneqq G/B$
associated with the crossed Dynkin diagrams
\Fone, \Ftwo, and \Fthree respectively.
The Picard group of $\F$ can be identified with the weight lattice of $G$,
which in turn can be identified with $\bZ^2$ as
$
 (a,b) \coloneqq a \omega_1 + b \omega_2,
$
where $\omega_1$ and $\omega_2$ are the fundamental weights
associated with the long root and the short root respectively.
We write the line bundle associated with the weight $(k,l)$ as $\cO_\F(k,l)$.


Let
\begin{align}
 R
  &\coloneqq \bigoplus_{k,l=0}^\infty H^0 \lb \cO_\F(k,l) \rb
  \cong \bigoplus_{k,l=0}^\infty \lb V^G_{(k,l)} \rb^\dual
\end{align}
be the Cox ring of $\F$,
where $\lb V^G_{(k,l)} \rb^\dual$ is the dual of the irreducible representation of $G$
with the highest weight $(k,l)$.

\begin{comment}


Since $\F$ is a Fano variety,
$R$ is Gorenstein
by \cite[Theorem 1.2]{MR3055212}
(cf.~also \cite[Remark 4.8]{MR3275656}).
Since the canonical bundle $\omega_{\F}$
is isomorphic to $\cO_{\F}(-2,-2)$,
the canonical module $K_R$ is isomorphic
to the shift $R(-2, -2)$ of the free module
(see e.g.~\cite[Lemma 2.12]{MR2641200}).

\end{comment}


The $\bZ^2$-grading of $R$
defines a $(\bGm)^2$-action on $\Spec R$,
which induces an action of $\bGm$
embedded in $(\bGm)^2$
by the anti-diagonal map
$\alpha \mapsto (\alpha,\alpha^{-1})$.
We write the geometric invariant theory quotients as
\begin{align}
 \V_+
  \coloneqq \Proj R_+, \quad
 \V_-
  \coloneqq \Proj R_-, \quad
 \V_0
  \coloneqq \Spec R_0,
\end{align}
where
\begin{align}
 R_n = \bigoplus_{i \in \bZ} R_{i, n-i}, \quad
 R_+ \coloneqq \bigoplus_{n=0}^{\infty} R_n, \quad
 R_- \coloneqq \bigoplus_{n=0}^{\infty} R_{-n}.
\end{align}


$\V_+$ and $\V_-$ are the total spaces
of the dual of the equivariant vector bundles of rank 2
on $\Q$ and $\G$
associated with irreducible representations of $P_1$ and $P_2$
with the highest weight $(1,1)$.
The structure morphisms
$
 \phi_+ \colon \V_+ \to \V_0
$
and
$
 \phi_- \colon \V_- \to \V_0
$
are crepant resolutions
which contract the zero-sections.


The same construction
for the simply-connected simple algebraic group
$\Sp(2)$ of type $C_2$,
which is accidentally isomorphic
to the simply-connected simple algebraic group
$\Spin(5)$
of type $B_2$,
gives the 5-fold flop discussed in \cite{MR3509912},
where it is attributed to Abuaf.


The main result in this paper is the following:

\begin{theorem} \label{th:main}
$\V_+$ and $\V_-$ are derived-equivalent.
\end{theorem}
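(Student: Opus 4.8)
The plan is to present both $\V_+$ and $\V_-$ as the two geometric invariant theory chambers of a single stack and to extract a common subcategory that restricts to an equivalence onto each of them. Concretely, I would work with the quotient stack $\mathcal{X} \coloneqq [\Spec R / \bGm]$ for the anti-diagonal $\bGm$-action, in which $\V_+$ and $\V_-$ appear as the open substacks of semistable points for the two opposite linearizations and $\V_0 = \Spec R_0$ is their common base. By the theory of variation of GIT and \emph{windows} (in the form developed by Halpern-Leistner and by Ballard--Favero--Katzarkov), a subcategory $\mathcal{W} \subset D^b(\mathcal{X})$ cut out by a grade-restriction rule on the $\bGm$-weights restricts to an equivalence onto $D^b(\V_+)$, and likewise onto $D^b(\V_-)$, once the width of $\mathcal{W}$ is matched to the $\bGm$-weights of the normal directions along the fixed locus. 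The crucial point is that the wall crossing is a flop: since $\phi_+$ and $\phi_-$ are crepant, the attracting and repelling weights along the fixed locus are balanced, which is exactly the numerical condition guaranteeing that one and the same window $\mathcal{W}$ restricts to an equivalence on both sides. Composing the two restriction equivalences then produces the derived equivalence $D^b(\V_+) \simeq D^b(\V_-)$.

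To make the window explicit --- and to stay close to Kuznetsov's argument \cite{1611.08386} --- I would use the descriptions of $\V_+$ and $\V_-$ as total spaces of rank $2$ bundles on the $G_2$-Grassmannians $\Q$ and $\G$. Pulling back the known Lefschetz-type exceptional collections on $\Q$, which is the five-dimensional quadric and is generated by its line bundles together with the spinor bundle, and on $\G$ along the two affine projections yields explicit tilting generators $T_+$ and $T_-$ of $D^b(\V_+)$ and $D^b(\V_-)$. I would then compute the endomorphism algebras $\operatorname{End}(T_\pm)$ over $R_0$ and identify both, up to mutation, with the endomorphism algebra of a single generator of $\mathcal{W}$; equivalently, both are realized as noncommutative crepant resolutions of $\V_0$ related by the mutation functor coming from the flop. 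The comparison is controlled by the highest weight $(1,1)$ representations defining the two rank $2$ bundles and by Borel--Weil--Bott computations of the cohomology of the associated homogeneous bundles on $\F$, $\G$, and $\Q$.

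The main obstacle, exactly as in the compact Calabi--Yau 3-fold case, is the explicit matching of the two collections across the wall: one must verify that the mutation relating the pulled-back collection on the $\Q$-side to the one on the $\G$-side is an equivalence with no residual terms. This amounts to a finite list of cohomology-vanishing statements for homogeneous bundles on the flag variety $\F$, to be checked by hand using the $G_2$ weight combinatorics, with the crepancy of $\phi_\pm$ ensuring that the bookkeeping closes up and that a single window serves both chambers. A minor technical nuisance is that $\Spec R$, being the spectrum of the Cox ring, is singular, so the window formalism should be applied either directly to the smooth spaces $\V_\pm$ or after a suitable presentation of $\mathcal{X}$; I expect this to be routine once the weight computation is in hand.
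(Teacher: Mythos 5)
Your approach is genuinely different from the paper's, and it runs into a real obstruction that you dismiss as minor. The window formalism of Halpern--Leistner and Ballard--Favero--Katzarkov requires the ambient quotient stack to be \emph{smooth}: the grade-restriction windows, the Kirwan--Ness stratification, and the numerical weight condition you invoke (comparing attracting and repelling weights along the fixed locus) are all defined and proved under that hypothesis. Here the ambient space is $\Spec R$, the affinization of $G/U$ for $G$ of type $G_2$, which is singular, so neither theorem applies as stated, and the "balanced weights" condition you want to deduce from crepancy of $\phi_\pm$ is not even well defined on the singular locus. Your two proposed escapes do not work as described: applying windows "directly to the smooth spaces $\V_\pm$" is incoherent, since a window must live on an ambient stack containing both semistable loci, not on either GIT quotient by itself; and "after a suitable presentation of $\mathcal{X}$" is precisely the missing mathematical content, not a routine step (one would need, say, to realize the wall crossing inside a smooth $\bGm$-variety or a linear representation, which is nontrivial here because $R$ is a proper quotient of the polynomial ring on $V^G_{(1,0)}\oplus V^G_{(0,1)}$). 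Tellingly, the paper itself lists the VGIT/window proof of this equivalence as an \emph{open problem} in its final paragraph, which is strong evidence that this is the crux rather than a nuisance.

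The paper avoids the singular ambient space altogether: it works on the common smooth resolution $\V$, the total space of $\cO_\F(-h-H)$ over the flag variety $\F$, which is simultaneously the blow-up of $\V_+$ along the zero section $\G$ and of $\V_-$ along $\Q$. Orlov's theorem \cite{MR1208153} then gives two semiorthogonal decompositions of $\D(\V)$, one containing $\Phi_+(\D(\V_+))$ and copies of the exceptional collection on $\G$, the other containing $\Phi_-(\D(\V_-))$ and the collection on $\Q$, and a chain of mutations --- justified by a short list of cohomology computations on $\F$ (\pref{lm:Ext1}--\pref{lm:orth2}) --- transforms one into the other, yielding the equivalence $\Phi=\Phi_-^!\circ\Phi_3$. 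Your fallback in the second paragraph (pulled-back tilting bundles and mutation of noncommutative crepant resolutions over $R_0$, in the spirit of Segal's treatment of the Abuaf flop \cite{MR3509912}) is a legitimate alternative strategy, but as written it leaves the entire content unproved: you must verify that the pullbacks of the exceptional collections actually are tilting on $\V_\pm$ (higher cohomology vanishing for infinitely many symmetric powers), and, crucially, you must exhibit the derived equivalence between the two endomorphism algebras rather than assert that they can be "identified up to mutation." In its current form the proposal is a research plan with the two hard steps --- either smoothing the ambient stack or matching the two NCCRs --- left open.
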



\pref{th:main} provides an evidence
for the conjecture \cite[Conjecture 4.4]{MR1957019}
\cite[Conjecture 1.2]{MR1949787}
that birationally equivalent smooth projective varieties
are K-equivalent
if and only if they are D-equivalent.


The proof of \pref{th:main} closely follows \cite{1611.08386},
where the derived equivalence of Calabi--Yau complete intersections
in $\G$ and $\Q$
defined by sections of the equivariant vector bundles
dual to $\V_+$ and $\V_-$.
The derived equivalence of these Calabi--Yau 3-folds in turn follows
from \pref{th:main}
using matrix factorizations.


\begin{notations}
We work over a field $\bfk$ throughout this paper.
All pull-back and push-forward are derived.
The complexes underlying $\Ext^\bullet(-,-)$ and $H^\bullet(-)$
will be denoted by $\hom(-,-)$ and $\h(-)$.
\end{notations}

\begin{acknowledgements}
We thank Atsushi Ito,
Makoto Miura, and
Shinnosuke Okawa
for collaborations
\cite{1606.04076,1606.04210,1612.08497}
which led to this work.
K.U. is supported by Grants-in-Aid for Scientific Research
(24740043,
15KT0105,
16K13743,
16H03930).
\end{acknowledgements}

\section{The blow-up diagram}

The $G_2$-Grassmannian
$\G$ is the zero locus
$
 s_\lambda^{-1}(0)
$
of the section
$
 s_\lambda
$
of
the equivariant vector bundle $\cQ^\dual(1)$
of rank 5 on $\Gr(2,V)$,
obtained as the tensor product
of the dual $\cQ^\dual$
of the universal quotient bundle $\cQ$
and the hyperplane bundle $\cO(1)$.
Here $V \coloneqq V^G_{(0,1)}$ is the 7-dimensional fundamental representation of $G_2$, and
$s_\lambda$ corresponds to the $G_2$-invariant 3-form
on $V$
under the isomorphism
$
 H^0(\Gr(2,V),\cQ^\dual(1)) \cong \bigwedge^3 V^\dual.
$
We write the $G_2$-equivariant vector bundle
associated with the irreducible representation of $P_1$
with the highest weight $(a,b)$
as $\cE_{(a,b)}$.
The restriction
$
 \scrU \coloneqq \cS|_\G
$
of the universal subbundle $\cS$ of rank 2 on $\Gr(2,V)$
is isomorphic to $\cE_{(-1,1)}$.

\begin{comment}
It is clear that
$\scrU^\dual$ is a $G_2$-equivariant vector bundle.
Since
\begin{align}
 H^0(\scrU^\dual)
  \cong V^\dual
  \cong \lb V^G_{(0,1)} \rb^\dual
  \cong H^0(\cE_{(0,1)}),
\end{align}
one has
\begin{align}
 \scrU^\dual \cong \cE_{(0,1)}.
\end{align}

By applying
\cite[Theorem 0.2]{MR2238172}
and \cite[Corollary 8.11]{MR2238172}
to the case where
$L=0$, $X_L=\G$ and $Y_L = \emptyset$,
one obtains
a full exceptional collection 
\begin{align} \label{eq:Kuz_col} 
 \lb \cO_{\G}, \scrU^\dual,
 \cO_{\G}(1), \scrU^\dual(1),
 \cO_{\G}(2), \scrU^\dual(2) \rb
\end{align}
in $D^b \coh \G$
\cite[Corollary 8.11]{MR2238172}.
This collection is
\begin{align}
 (\cE_{(0,0)}, \cE_{(0,1)}, \cE_{(1,0)}, \cE_{(1,1)}, \cE_{(2,0)}, \cE_{(2,1)}).
\end{align}
Note that
\begin{align}
 \cE_{(a,1)}^\vee \cong \cE_{(-a-1,1)}
\end{align}
and
\begin{align}
 \omega_{\G} \cong \cE_{(-3,0)}.
\end{align}
The collection is
\begin{align}
\begin{array}{ccc}
 (0,1) & (1,1) & (2,1) \\
 (0,0) & (1,0) & (2,0),
\end{array}
\end{align}
and helix is continued as
\begin{align}
\begin{array}{cccccc}
 \cdots & (0,1) & (1,1) & (2,1) &\cdots \\
 \cdots & (0,0) & (1,0) & (2,0) & (3,0) & \cdots.
\end{array}
\end{align}
One can easily see from
\begin{align}
 \Hom^*(\cE_{(0,1)}, \cE_{(3,0)})
  \cong H^*(\cE_{(-1,1)} \otimes \cE_{(3,0)})
  \cong H^*(\cE_{(2,1)})
\end{align}
and so on
that this helix is strong.
\end{comment}

The $G_2$-flag variety $\F$ is isomorphic to the total space
of the $\bP^1$-bundle
$
 \varpi_+ \colon \bP(\scrU) \to \G
$
associated with $\scrU$
(or any other equivariant vector bundle of rank 2,
since all of them are related by a twist by a line bundle).
We write the relative hyperplane class of $\varpi_+$ as $h$,
so that
\begin{align}
 (\varpi_+)_* \lb \cO_\F(h) \rb \cong \scrU^\dual.
\end{align}
The pull-back to $\F$ of the hyperplane class $H$ in $\G$
will be denoted by $H$ again by abuse of notation.

The other $G_2$-Grassmannian $\Q$
is a quadric hypersurface in $\bP(V)$.
We write the equivariant vector bundle on $\Q$
associated with the irreducible representation of $P_2$
with highest weight $(a,b)$ as $\cF_{(a,b)}$.
The flag variety $\F$ has a structure of a $\bP^1$-bundle
$
 \varpi_- \colon \F \to \Q,
$
whose relative hyperplane class is given by $H$.
We define a vector bundle $\scrK$ on $\Q$ by
\begin{align}
 \scrK \coloneqq \lb (\varpi_-)_* \lb \cO_\F(H) \rb \rb^\dual,
\end{align}
so that
$
 \F \cong \bP_\G(\scrK).
$
One can show that $\scrK$ is isomorphic to $\cF_{(1,-3)}$.
We write the hyperplane class of $\Q$ as $h$
by abuse of notation,
since it pulls back to $h$ on $\F$.

\begin{comment}
On $\F$,
one has
\begin{align}
 \cL_{(1,1)}^\vee
  &\cong \cL_{(-1,-1)}
  \cong \cO_{\F}(-h-H).
\end{align}
On $\Q$,
one has
\begin{align}
 \cF_{(1,1)}^\vee
  &\cong \cF_{(1,-4)}
  \cong \scrK(-h).
\end{align}
On $\G$,
one has
\begin{align}
 \cE_{(1,1)}^\vee
  &\cong \cE_{(-2,1)}
  \cong \scrU(-H).
\end{align}

Recall the diagram
\begin{align} \label{eq:Kuznetsov_diagram}
\begin{gathered}
\xymatrix{
& D \ar@{^(->}[r]^i \ar[ddl]_p & M \ar[d] \ar[ddl]_{\pi_M} \ar[ddr]^{\rho_M} & E \ar@{_(->}[l]_-j \ar[ddr]^q \\
&& \F \ar[dl]^\pi \ar[dr]_\rho \\
X  \ar@{^{(}->}[r] & \Q && \G & Y \ar@{_{(}->}[l] 
}
\end{gathered}
\end{align}
from \cite{1611.08386},
which summarizes the situation in \cite{1607.07821,1606.04210}.
\end{comment}

Let $\V$ be the total space
of the line bundle $\cO_\F(-h-H)$ on $\F$.
The structure morphism
will be denoted by
$
 \pi \colon \V \to \F.
$
The Cox ring of $\V$ is
the $\bN^2$-graded ring
\begin{align}
 S = \bigoplus_{k,l=0}^\infty S_{k,l}
\end{align}
given by
\begin{align}
 S_{k,l}
  &\coloneqq H^0 \lb \cO_\V(k,l) \rb \\
  &\cong H^0 \lb \pi_* \lb \cO_\V(k,l) \rb \rb \\
  &\cong H^0 \lb \pi_* \cO_\V \otimes \cO_\F(k,l) \rb \\
  &\cong H^0 \lb \lb \bigoplus_{m=0}^\infty \cO_\F(m,m) \rb \otimes \cO_\F(k,l) \rb \\
  &\cong \bigoplus_{m=0}^\infty H^0 \lb \cO_\F(k+m,l+m) \rb \\
  &\cong \bigoplus_{m=0}^\infty \lb V^G_{(k+m,l+m)} \rb^\dual,
\end{align}
whose multiple Proj recovers $\V$.
Similarly,
the Cox ring of the total space $\W_+$ of the bundle
$
 \cE_{(1,1)}^\dual \cong \scrU(-H)
$
is given by
$
 \bigoplus_{k=0}^\infty H^0 \lb \cO_{\W_+}(k H) \rb
$
where
\begin{align}
 H^0 \lb \cO_{\W_+}(k H) \rb
  &\cong H^0 \lb \pi_* \lb \cO_{\W_+}(k H) \rb \rb \\
  &\cong H^0 \lb \pi_* \cO_{\W_+} \otimes \cO_\G (k H) \rb \\
  &\cong \bigoplus_{m=0}^\infty H^0 \lb \lb \Sym^m \cE_{(1,1)} \rb \otimes \cO_\G(k H) \rb \\ 
  &\cong \bigoplus_{m=0}^\infty H^0 \lb \cE_{(m,m)} \otimes \cE_{(k,0)} \rb \\ 
  &\cong \bigoplus_{m=0}^\infty H^0 \lb \cE_{(m+k,m)} \rb.
\end{align}
This is isomorphic to $R_+$,
so that $\W_+$ is isomorphic to $\V_+$,
and the affinization morphism
\begin{align} \label{eq:V-affinization}
 \V
  \to \Spec H^0 \lb \cO_\V \rb
  \cong \V_0
\end{align}
is the composition of the natural projection
$
 \varphi_+ \colon \V \to \V_+
$
and the affinization morphism
$
 \phi_+ \colon \V_+ \to \V_0.
$
Since $\V_+$ is the total space of $\cE_{(1,1)}^\dual$,
the ideal sheaf of the zero-section is the image of the natural morphism
from $\pi_+^* \cE_{(1,1)}$ to $\cO_{\V_+}$,
and the morphism $\varphi_+$ is the blow-up along it.
Similarly,
the affinization morphism \pref{eq:V-affinization}
also factors into the blow-up
$
 \varphi_- \colon \V \to \V_-
$
and the affinization morphism
$
 \phi_- \colon \V_- \to \V_0,
$
and one obtains the following commutative diagram:
\begin{align} \label{eq:blow-up_diagram}
\begin{gathered}
\xymatrix{
 & \ar[dl]_{\varphi_+} \V \ar[dr]^{\varphi_-} \\
 \V_+ \ar[dr]^{\phi_+} & & \ar[dl]_{\phi_-} \V_- \\
 & \V_0
}
\end{gathered}
\end{align}

\section{Some extension groups}

The zero-sections and the natural projections fit
into the following diagram:
\begin{align} \label{eq:zero-sections_diagram}
\begin{gathered}
\xymatrix{
 & \F \ar[dl]_{\varpi_+} \ar@{^(->}[d]^\iota \ar[dr]^{\varpi_-} & \\
 \G \ar@{^(->}[d] & \V \ar[dl]_{\phi_+} \ar[dr]^{\phi_-} & \Q \ar@{^(->}[d] \\
 \V+ & & \V_-
}
\end{gathered}
\end{align}
We write
$
 \scrU_\F \coloneqq \varpi_+^* \scrU,
$
$
 \scrS_\F \coloneqq \varpi_-^* \scrS,
$
and
$
 \scrU_\V \coloneqq \pi^* \scrU_\F.
$
By abuse of notation,
we use the same symbol for an object of $\D(\F)$
and its image in $\D(\V)$ by the push-forward $\iota_*$.
Since $\V$ is the total space of $\cO_\V(-h-H)$,
one has a locally free resolution
\begin{align}
 0 \to \cO_\V(h+H) \to \cO_\V \to \cO_\F \to 0
\end{align}
of $\cO_\F$ as an $\cO_\V$-module.

By tensoring $\cO_\F(-h)$ to \cite[Equation (5)]{1611.08386},
one obtains an exact sequence
\begin{align} \label{eq:Ext4}
 0 \to \cO_\F(H-2h) \to \scrU_\F^\dual(-h) \to \cO_\F \to 0.
\end{align}

\pref{lm:Ext1} and \pref{pr:S} below are taken from \cite{1611.08386}:

\begin{lemma}[{\cite[Lemma 1]{1611.08386}}] \label{lm:Ext1}
\begin{enumerate}[(i)]
 \item
Line bundles $\cO_\F(th-H)$ and $\cO_\F(tH-h)$ are acyclic for all $t \in \bZ$.
 \item
Line bundles $\cO_\F(-2H)$ and $\cO_\F(2h-2H)$ are acyclic and
\begin{align*}
 H^\bullet(\cO_\F(3h-2H)) \cong \bfk[-1].
\end{align*}
 \item
Vector bundles $\scrU_\F(-2H)$, $\scrU_\F(-H)$, $\scrU_\F(h-H)$ and $\scrU_\F \otimes \scrU_\F(-H)$
are acyclic,
and
\begin{align*}
 H^\bullet(\scrU_\F(h)) \cong \bfk, \quad
 H^\bullet(\scrU_\F \otimes \scrU_\F(h)) \cong \bfk[-1].
\end{align*}
\end{enumerate}
\end{lemma}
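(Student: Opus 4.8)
The plan is to compute every cohomology group directly by the Borel--Weil--Bott theorem on the full flag variety $\F = G/B$. Under the identification of the Picard group of $\F$ with the weight lattice, the pullback $H$ of the hyperplane class of $\G = G/P_1$ is $\omega_1 = (1,0)$ and the relative hyperplane class $h$ of $\varpi_+$ is $\omega_2 = (0,1)$, so that $\cO_\F(ah+bH)$ is the line bundle attached to the weight $(b,a)$; this matches $\cO_\F(-h-H)$ having weight $(-1,-1)$. Writing $\rho = \omega_1 + \omega_2 = (1,1)$ and using the simple reflections $s_1(a,b) = (-a, 3a+b)$ and $s_2(a,b) = (a+b, -b)$ on the weight lattice, the recipe is: for a weight $\lambda$ form $\lambda + \rho$; if $\lambda + \rho$ is orthogonal to some positive root then $\cO_\F(\lambda)$ is acyclic, and otherwise there is a unique $w$ in the Weyl group making $w(\lambda+\rho)$ dominant, the cohomology is concentrated in degree $\ell(w)$, and it is the irreducible representation of highest weight $w(\lambda+\rho) - \rho$ (up to duality).

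First I would dispose of the line bundles in (i) and (ii). For $\cO_\F(th - H)$ and $\cO_\F(tH - h)$ one gets $\lambda + \rho = (0, t+1)$ and $(t+1, 0)$ respectively, each lying on a fundamental wall for every $t$, so both are acyclic. For $\cO_\F(-2H)$ and $\cO_\F(2h-2H)$ the shifted weights are $(-1,1)$ and $(-1,3)$, which are orthogonal to the positive roots $\alpha_1 + 3\alpha_2$ and $\alpha_1 + \alpha_2$ respectively, hence these too are acyclic. The only surviving line bundle is $\cO_\F(3h-2H)$, with $\lambda + \rho = (-1,4)$ regular; the single reflection $s_1$ carries it to the dominant weight $(1,1) = \rho$, so the cohomology sits in degree $\ell(s_1) = 1$ and is the trivial representation, giving $H^\bullet(\cO_\F(3h-2H)) \cong \bfk[-1]$.

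For the bundles in (iii) I would reduce to these line-bundle computations using the tautological sequence of $\F \cong \bP_\G(\scrU)$, namely $0 \to \cO_\F(-h) \to \scrU_\F \to \cO_\F(h-H) \to 0$, whose two line-bundle terms carry the weights $(0,-1)$ and $(-1,1)$. Twisting this sequence (or its tensor square) by the appropriate line bundle and passing to the long exact cohomology sequence turns each assertion into an input from the previous step. For instance, twisting by $\cO_\F(h)$ gives $0 \to \cO_\F \to \scrU_\F(h) \to \cO_\F(2h-H) \to 0$ with $\cO_\F(2h-H)$ acyclic, so $H^\bullet(\scrU_\F(h)) \cong H^\bullet(\cO_\F) \cong \bfk$; and tensoring the sequence with $\scrU_\F(h)$ reduces $H^\bullet(\scrU_\F \otimes \scrU_\F(h))$ to $H^\bullet(\scrU_\F) = 0$ together with $H^\bullet(\scrU_\F(2h-H)) \cong \bfk[-1]$, the latter coming in turn from the $\cO_\F(3h-2H)$ computation. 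The acyclicity of $\scrU_\F(-2H)$, $\scrU_\F(-H)$, $\scrU_\F(h-H)$ and $\scrU_\F \otimes \scrU_\F(-H)$ follows the same way, each subquotient landing on a wall.

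I expect the only genuinely delicate point to be the $G_2$ Weyl-group bookkeeping behind the two nonvanishing classes: identifying which root hyperplane a given $\lambda + \rho$ meets, and, in the regular case, pinning down both the length $\ell(w)$ (hence the cohomological degree) and the fact that $w(\lambda+\rho) - \rho = 0$, so that the surviving group is exactly one-dimensional. Once $H^\bullet(\cO_\F(3h-2H)) \cong \bfk[-1]$ is in hand the rest is routine, but one must check at each twist that all the auxiliary subquotients stay acyclic, so that the long exact sequences isolate a single copy of $\bfk$ or $\bfk[-1]$ rather than mixing contributions.
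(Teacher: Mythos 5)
Your proposal is correct, but note that this paper contains no proof of the lemma to compare against: the statement is quoted verbatim from Kuznetsov \cite[Lemma~1]{1611.08386} (the text says explicitly that \pref{lm:Ext1} and \pref{pr:S} are \emph{taken from} that paper), and the proof given there is precisely the Borel--Weil--Bott computation you describe, so your route reconstructs the cited source rather than diverging from anything in this paper. Your details check out. With $\cO_\F(ah+bH)$ attached to the weight $(b,a)=b\omega_1+a\omega_2$, a weight $(a,b)$ pairs with the six positive coroots of $G_2$ to give $a$, $b$, $a+b$, $2a+b$, $3a+b$, $3a+2b$; hence the shifted weights $(0,t+1)$ and $(t+1,0)$ in (i) are singular for all $t$, the weights $(-1,1)$ and $(-1,3)$ in (ii) are annihilated by $a+b$ and $3a+b$ (your roots $\alpha_1+3\alpha_2$ and $\alpha_1+\alpha_2$), and $(-1,4)$ is regular with $s_1(-1,4)=(1,1)=\rho$, of length one, giving $H^\bullet(\cO_\F(3h-2H))\cong\bfk[-1]$. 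The tautological sequence $0\to\cO_\F(-h)\to\scrU_\F\to\cO_\F(h-H)\to 0$ is the right one, the quotient being forced by $\det\scrU_\F\cong\cO_\F(-H)$, and every auxiliary subquotient needed in (iii), namely $\cO_\F(-2h-H)$, $\cO_\F(h-3H)$, $\cO_\F(h-2H)$, $\cO_\F(2h-3H)$, $\cO_\F(-2H)$, $\cO_\F(2h-2H)$, is indeed singular (their $\rho$-shifts land on the walls $a=0$, $a+b=0$, $2a+b=0$, $3a+2b=0$, $a+b=0$, $3a+b=0$ respectively), while $\scrU_\F(2h-H)$ contributes exactly the $\bfk[-1]$ coming from $\cO_\F(3h-2H)$, so the long exact sequences isolate single copies of $\bfk$ and $\bfk[-1]$ as you claim. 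One caveat worth making explicit: Borel--Weil--Bott requires characteristic zero, whereas this paper's standing convention is only ``a field $\bfk$''; the hypothesis is implicit in the cited source as well, so state it in your write-up.
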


\begin{proposition}[{\cite[Proposition 3 and Lemma 4]{1611.08386}}] \label{pr:S}
One has an exact sequence
\begin{equation} \label{eq:UUExt}
 0 \to \scrU_\F \to \scrS_\F \to \scrU_\F^\dual(-h) \to 0.
\end{equation}
\end{proposition}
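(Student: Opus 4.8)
The plan is to realise \pref{eq:UUExt} as the essentially unique nonsplit extension of $\scrU_\F^\dual(-h)$ by $\scrU_\F$, and then to recognise its middle term as $\scrS_\F$. First I would compute the governing extension group. Since
\[
 \Ext^1_\F \lb \scrU_\F^\dual(-h), \scrU_\F \rb
  \cong H^1 \lb \F, \scrU_\F \otimes \scrU_\F(h) \rb,
\]
\pref{lm:Ext1}(iii) shows that this group is one-dimensional, while the same cohomology computation gives $\Hom_\F(\scrU_\F^\dual(-h), \scrU_\F) \cong H^0(\F, \scrU_\F \otimes \scrU_\F(h)) = 0$. Hence there is, up to isomorphism, a unique nonsplit extension
\[
 0 \to \scrU_\F \to E \to \scrU_\F^\dual(-h) \to 0,
\]
and the whole problem reduces to proving $E \cong \scrS_\F$.

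The second step is to descend $E$ along $\varpi_-$. Dualising \pref{eq:Ext4} and twisting by $\cO_\F(-h)$ produces the tautological sequence
\[
 0 \to \cO_\F(-h) \to \scrU_\F \to \cO_\F(h-H) \to 0
\]
of the $\bP^1$-bundle $\varpi_+$. Restricting to a fibre $\ell$ of $\varpi_-$, on which $\cO_\F(H)$ restricts to $\cO(1)$ and $\cO_\F(h)$ to $\cO$, I obtain $\scrU_\F|_\ell \cong \cO \oplus \cO(-1)$ and $\scrU_\F^\dual(-h)|_\ell \cong \cO \oplus \cO(1)$. A Leray computation for $\varpi_-$ then controls the extension class: since $H^1$ of a line bundle on the quadric $\Q$ vanishes and $R^0 \varpi_{-*} \lb \scrU_\F \otimes \scrU_\F(h) \rb$ is a line bundle, the one-dimensional group $H^1(\F, \scrU_\F \otimes \scrU_\F(h))$ injects into $H^0(\Q, R^1 \varpi_{-*} \lb \scrU_\F \otimes \scrU_\F(h) \rb)$, so the class of $E$ restricts to a nonzero class on every fibre $\ell$. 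As $\Ext^1_{\bP^1}(\cO \oplus \cO(1), \cO \oplus \cO(-1)) \cong \bfk$ is concentrated in the $\cO(1)$-by-$\cO(-1)$ summand, this forces $E|_\ell \cong \cO^{\oplus 4}$. Being fibrewise trivial along $\varpi_-$, the bundle $E$ descends: $E \cong \varpi_-^* E'$ with $E' \coloneqq \varpi_{-*} E$ locally free of rank $4$ on $\Q$.

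It then remains to identify $E'$ with the spinor bundle $\scrS$. Here I would exploit that every bundle in sight is $G_2$-equivariant, so $E'$ is a homogeneous bundle on $\Q = G/P_2$. Reading off the weights of the associated graded of the fibre of $E$ at the base point — those of $\scrU_\F$, namely $-h$ and $h-H$, and those of $\scrU_\F^\dual(-h)$, namely $0$ and $H-2h$, whose product gives $\det E' \cong \cO_\Q(-2h)$ in agreement with $\scrS$ — one recognises $E'$ as the irreducible homogeneous bundle attached to the spin representation of $P_2$, that is, as $\scrS$. I expect this last identification to be the main obstacle: the cohomological input of \pref{lm:Ext1} makes the existence, uniqueness and descent of the extension essentially automatic, whereas pinning down the equivariant structure of the spinor bundle — equivalently, constructing the Clifford-multiplication map $\scrU_\F \hookrightarrow \scrS_\F$ that exhibits $\scrS_\F$ itself as an extension of $\scrU_\F^\dual(-h)$ by $\scrU_\F$ — requires a genuine computation with the $G_2$-representation theory of $V$ and its spin module.
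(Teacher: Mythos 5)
First, a point of framing: the paper contains no proof of \pref{pr:S} at all --- the statement is imported verbatim from \cite[Proposition 3 and Lemma 4]{1611.08386} --- so your proposal must stand as a self-contained argument, and as such it is incomplete. Your first two steps (uniqueness of the nonsplit extension from \pref{lm:Ext1}(iii), and descent along $\varpi_-$) are sound in outline, but the descent step has a real, though fixable, gap: from the injection $H^1(\F,\scrU_\F\otimes\scrU_\F(h))\hookrightarrow H^0\lb\Q,R^1\varpi_{-*}\lb\scrU_\F\otimes\scrU_\F(h)\rb\rb$ you conclude that the class of $E$ is nonzero on \emph{every} fibre, but a nonzero section of an arbitrary line bundle on $\Q$ may vanish along a divisor, so as stated you only get triviality of $E$ on the fibres over a dense open subset, which is not enough to descend. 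To close this you must identify the line bundle: $R^1\varpi_{-*}\lb\scrU_\F\otimes\scrU_\F(h)\rb\cong\cO_\Q$, which does hold because in the filtration with graded pieces $\cO_\F(-h)$, $\cO_\F(h-H)^{\oplus 2}$, $\cO_\F(3h-2H)$ only the last piece has nonzero $R^1\varpi_{-*}$, and that piece is exactly $\omega_{\varpi_-}\cong\cO_\F(-2h-2H)\otimes\varpi_-^*\cO_\Q(5h)$, whose first direct image is $\cO_\Q$ by relative duality. (Alternatively: the extension class spans a one-dimensional, hence trivial, $G_2$-module, so $E$ is $G_2$-equivariant and its splitting type on fibres is constant by transitivity of $G_2$ on $\Q$.)

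The serious gap is the final identification $E'\cong\scrS$, which you yourself flag as ``the main obstacle'' and then do not carry out. Rank and determinant do not determine a bundle --- $\cO_\Q^{\oplus 2}\oplus\cO_\Q(-h)^{\oplus 2}$ also has rank $4$ and determinant $\cO_\Q(-2h)$ --- and the appeal to ``the irreducible homogeneous bundle attached to the spin representation of $P_2$'' is not an argument: you have shown neither that $E'$ corresponds to an irreducible $P_2$-representation, nor that the spinor bundle does, nor that such a bundle with these invariants is unique. This identification is the entire content of the proposition: the mutation argument of Section 4 needs $\scrS_\F$ to be the pullback of the \emph{specific} exceptional object $\scrS$ of \pref{eq:EC2}, so exhibiting some rank-$4$ bundle on $\Q$ whose pullback is the unique nonsplit extension proves nothing until it is matched with $\scrS$. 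One honest way to finish within your framework: show $E$ is simple, $\Hom_\F(E,E)\cong\bfk$, by the standard diagram chase on the extension (the needed inputs are $\Hom(\scrU_\F^\dual(-h),\scrU_\F)=0$ from \pref{lm:Ext1}(iii) and $\Hom(\scrU_\F,\scrU_\F^\dual(-h))=0$, which follows from $\varpi_{+*}\cO_\F(-h)=0$), so that $E'$ is indecomposable; then show $E'$ has no intermediate cohomology in any twist, $H^i(\Q,E'(jh))\cong H^i(\F,E(jh))=0$ for $0<i<5$, computed from the filtration by Borel--Weil--Bott; then invoke Kn\"orrer's theorem (or Ottaviani's characterization) that line bundles and spinor bundles are the only indecomposable arithmetically Cohen--Macaulay bundles on a smooth quadric, and fix the twist by $\det E'\cong\cO_\Q(-2h)$. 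The alternative is to construct the inclusion $\scrU_\F\to\scrS_\F$ directly from Clifford multiplication on the spin module, which is the route of \cite{1611.08386} --- that is, precisely the computation your proposal defers.
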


\pref{lm:Ext1} immediately implies the following:

\begin{lemma} \label{lm:orth1}
$\cO_\F(-H)$ is right orthogonal to both $\scrU_\F^\dual(-h)$ and $\cO_\F(-h)$.
\end{lemma}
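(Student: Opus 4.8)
The plan is to unwind the definition of right orthogonality into two vanishing statements for graded $\Ext$-groups and then match each of them to a line of \pref{lm:Ext1}. By definition, $\cO_\F(-H)$ being right orthogonal to an object $A$ means $\Ext^\bullet(A, \cO_\F(-H)) = 0$, so the task is to prove the two vanishings
\begin{align*}
 \Ext^\bullet \lb \scrU_\F^\dual(-h), \cO_\F(-H) \rb = 0
 \quad \text{and} \quad
 \Ext^\bullet \lb \cO_\F(-h), \cO_\F(-H) \rb = 0.
\end{align*}

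Since all four objects are locally free sheaves on the smooth projective variety $\F$, I would compute each $\Ext$-group as the sheaf cohomology of the tensor product $E^\dual \otimes F$. For the second group this immediately gives $\Ext^\bullet(\cO_\F(-h), \cO_\F(-H)) \cong H^\bullet(\cO_\F(h - H))$, which vanishes upon specializing $t = 1$ in the acyclicity of $\cO_\F(th - H)$ from \pref{lm:Ext1}(i). For the first group I would first record the dualization $\lb \scrU_\F^\dual(-h) \rb^\dual \cong \scrU_\F(h)$, whence $\Ext^\bullet(\scrU_\F^\dual(-h), \cO_\F(-H)) \cong H^\bullet(\scrU_\F(h - H))$; this vanishes because $\scrU_\F(h - H)$ is among the acyclic bundles listed in \pref{lm:Ext1}(iii).

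There is no real obstacle here, which is why the statement is advertised as an immediate consequence of \pref{lm:Ext1}: the entire content is the bookkeeping that pairs each required vanishing with the correct prepared acyclicity result. The only points that demand a little care are the dualization identity $\lb \scrU_\F^\dual(-h) \rb^\dual \cong \scrU_\F(h)$ and keeping the twists by $h$ and $H$ straight, so that \pref{lm:Ext1}(i) is invoked with $t = 1$ for the line-bundle computation and \pref{lm:Ext1}(iii) for the $\scrU_\F$-computation.
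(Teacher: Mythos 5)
There is a genuine gap here: you compute the $\Ext$-groups in $\D(\F)$, but the lemma is a statement about $\D(\V)$. By the abuse of notation fixed just before this lemma, $\cO_\F(-H)$, $\cO_\F(-h)$ and $\scrU_\F^\dual(-h)$ stand for the push-forwards under $\iota \colon \F \hookrightarrow \V$ of those sheaves, and the right orthogonality must hold in $\D(\V)$, because that is where the semiorthogonal decompositions being mutated live. For sheaves supported on the zero-section, $\hom_{\cO_\V}$ is not computed by $H^\bullet(\F, E^\dual \otimes F)$: resolving $\iota_* \cO_\F(-h)$ by the Koszul complex $\lc \cO_\V(H) \to \cO_\V(-h) \rc$ obtained by twisting $0 \to \cO_\V(h+H) \to \cO_\V \to \cO_\F \to 0$, one finds
\begin{align*}
 \hom_{\cO_\V} \lb \cO_\F(-h), \cO_\F(-H) \rb
  \cong \h \lb \lc \cO_\F(h-H) \to \cO_\F(-2H) \rc \rb,
\end{align*}
so besides your term $H^\bullet(\cO_\F(h-H))$ there is a second contribution $H^\bullet(\cO_\F(-2H))$ coming from the conormal bundle $\cO_\F(h+H)$ of the zero-section; likewise
\begin{align*}
 \hom_{\cO_\V} \lb \scrU_\F^\dual(-h), \cO_\F(-H) \rb
  \cong \h \lb \lc \scrU_\F(h-H) \to \scrU_\F(-2H) \rc \rb
\end{align*}
carries the extra contribution $H^\bullet(\scrU_\F(-2H))$. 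Your argument establishes only orthogonality in $\D(\F)$, which does not in general imply orthogonality in $\D(\V)$, and which is not what the mutation argument needs.

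The statement is nonetheless true, because the two contributions you missed also vanish by \pref{lm:Ext1}: $\cO_\F(-2H)$ is acyclic by part (ii), and $\scrU_\F(-2H)$ is acyclic by part (iii). So the repair is mechanical: replace your computations of $\Ext$ over $\cO_\F$ by computations of $\hom_{\cO_\V}$ via the Koszul resolution, and check the vanishing of both terms of each resulting two-term complex. Note that this is precisely why \pref{lm:Ext1} lists $\cO_\F(-2H)$ and $\scrU_\F(-2H)$ among the acyclic bundles; on your reading of the lemma those items would never be used.
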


\begin{proof}
We have
\begin{align}
 \hom_{\cO_\V} \lb \cO_\F(-h), \cO_\F(-H) \rb
  &\cong \hom_{\cO_\V} \lb \lc \cO_\V(H) \to \cO_\V(-h) \rc, \cO_\F(-H) \rb \\
  &\cong \h \lb \lc \cO_\F(h-H) \to \cO_\F(-2H) \rc \rb
\end{align}
and
\begin{align}
 \hom_{\cO_\V} \lb \scrU_\F^\dual(-h), \cO_\F(-H) \rb
  &\cong \hom_{\cO_\V} \lb \lc \scrU_\V^\dual(H) \to \scrU_\V^\dual(-h) \rc, \cO_\F(-H) \rb \\
  &\cong \h \lb \lc \scrU_\F(h-H) \to \scrU_\F(-2H) \rc \rb,
\end{align}
both of which vanish by \pref{lm:Ext1}.
\end{proof}

\begin{lemma} \label{lm:Ext2}
One has
\begin{align}
 \hom_{\cO_\V} \lb \scrU_\F^\dual(-h), \scrU_\F \rb
  \cong \bfk[-1].
\end{align}
\end{lemma}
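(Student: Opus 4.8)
The plan is to mimic the proof of \pref{lm:orth1}: replace the source $\scrU_\F^\dual(-h)$ by a two-term complex of sheaves that are locally free on $\V$, apply $\hom_{\cO_\V}(-,\scrU_\F)$, and then reduce the two resulting pieces to sheaf cohomology on $\F$, where \pref{lm:Ext1} supplies all the content. The point of passing through $\V$ is that $\scrU_\F^\dual(-h)$, viewed via $\iota_*$ as an $\cO_\V$-module, is not locally free on $\V$, but it admits a short locally free resolution coming from the fact that $\F$ is the zero-section of the total space $\V$ of $\cO_\F(-h-H)$.

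Concretely, I would tensor the resolution $0 \to \cO_\V(h+H) \to \cO_\V \to \cO_\F \to 0$ by $\scrU_\V^\dual(-h)$ to obtain the exact sequence $0 \to \scrU_\V^\dual(H) \to \scrU_\V^\dual(-h) \to \scrU_\F^\dual(-h) \to 0$, so that $\scrU_\F^\dual(-h)$ is quasi-isomorphic to the complex $\lc \scrU_\V^\dual(H) \to \scrU_\V^\dual(-h) \rc$, exactly as in \pref{lm:orth1}. Applying $\hom_{\cO_\V}(-,\scrU_\F)$ then yields a distinguished triangle expressing $\hom_{\cO_\V}(\scrU_\F^\dual(-h),\scrU_\F)$ in terms of $\hom_{\cO_\V}(\scrU_\V^\dual(-h),\scrU_\F)$ and $\hom_{\cO_\V}(\scrU_\V^\dual(H),\scrU_\F)$. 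Each of these is computed by dualizing the locally free source and using the projection formula together with $\pi \circ \iota = \id_\F$ and $\scrU_\V = \pi^* \scrU_\F$, which give $\iota^* \scrU_\V \cong \scrU_\F$; thus $\hom_{\cO_\V}(\scrU_\V^\dual(-h),\scrU_\F) \cong \h(\scrU_\F \otimes \scrU_\F(h))$ and $\hom_{\cO_\V}(\scrU_\V^\dual(H),\scrU_\F) \cong \h(\scrU_\F \otimes \scrU_\F(-H))$. By \pref{lm:Ext1}(iii) the latter is acyclic and the former is isomorphic to $\bfk[-1]$, so the triangle collapses and identifies $\hom_{\cO_\V}(\scrU_\F^\dual(-h),\scrU_\F)$ with $\bfk[-1]$.

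Since the genuine vanishing and nonvanishing statements are all imported from \pref{lm:Ext1}(iii), I expect no serious obstacle, and the only delicate point to be bookkeeping. Specifically, I would take care to record the correct cohomological placement of the two terms $\scrU_\V^\dual(H)$ and $\scrU_\V^\dual(-h)$ of the resolution, and to confirm that the acyclicity of the $\scrU_\V^\dual(H)$-contribution really kills any potential differential, so that no spurious degree shift survives and the answer is exactly $\bfk[-1]$ rather than a shift of it.
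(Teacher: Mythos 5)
Your proposal is correct and is essentially identical to the paper's proof: the paper also replaces $\scrU_\F^\dual(-h)$ by the complex $\lc \scrU_\V^\dual(H) \to \scrU_\V^\dual(-h) \rc$, applies $\hom_{\cO_\V}(-,\scrU_\F)$ to reduce to $\h \lb \lc \scrU_\F \otimes \scrU_\F(h) \to \scrU_\F \otimes \scrU_\F(-H) \rc \rb$, and concludes by \pref{lm:Ext1} that the first term gives $\bfk[-1]$ while the second vanishes. Your bookkeeping of the cohomological placement is also right, so nothing is missing.
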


\begin{proof}
One has
\begin{align}
 \hom_{\cO_\V} \lb \scrU_\F^\dual(-h), \scrU_\F \rb
  &\cong \hom_{\cO_\V} \lb \lc \scrU_\V^\dual(H) \to \scrU_\V^\dual(-h) \rc, \scrU_\F \rb \\
  &\cong \h \lb \lc \scrU_\F \otimes \scrU_\F(h) \to \scrU_\F \otimes \scrU_\F(-H) \rc \rb.
\end{align}
\pref{lm:Ext1} shows that
the first term gives $\bfk[-1]$ and the second term vanishes.
\end{proof}

\begin{lemma} \label{lm:Ext3}
One has
\begin{align}
 \hom_{\cO_\V} \lb \scrU_\F^\dual(-h), \cO_\F \rb
  \cong \bfk.
\end{align}
\end{lemma}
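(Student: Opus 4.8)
The plan is to run the proof of \pref{lm:Ext2} almost verbatim, simply replacing the target $\scrU_\F$ of the $\hom$ by $\cO_\F$. First I would resolve the source $\scrU_\F^\dual(-h)$ by a two-term complex of locally free $\cO_\V$-modules: tensoring the resolution
\begin{align}
 0 \to \cO_\V(h+H) \to \cO_\V \to \cO_\F \to 0
\end{align}
by $\scrU_\V^\dual(-h)$ yields the quasi-isomorphism $\scrU_\F^\dual(-h) \cong \lc \scrU_\V^\dual(H) \to \scrU_\V^\dual(-h) \rc$, exactly the presentation already used in the displayed computations of \pref{lm:orth1} and \pref{lm:Ext2}.

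Next I would apply $\hom_{\cO_\V}(-, \cO_\F)$ termwise to this complex. Restricting each entry to the zero-section $\F$ along $\iota$ (so that the $\cO_\V$-$\hom$ into $\cO_\F$ becomes a cohomology computation on $\F$, using $\iota^\ast \scrU_\V^\dual \cong \scrU_\F^\dual$ and $\iota^\ast \cO_\V(\bullet) \cong \cO_\F(\bullet)$), I expect to obtain
\begin{align}
 \hom_{\cO_\V} \lb \scrU_\F^\dual(-h), \cO_\F \rb
  \cong \h \lb \lc \scrU_\F(h) \to \scrU_\F(-H) \rc \rb.
\end{align}
This is the same manipulation as in \pref{lm:Ext2}, except that the outer tensor factor $\scrU_\F$ present there is now $\cO_\F$, so the two surviving bundles are $\scrU_\F(h)$ and $\scrU_\F(-H)$ in place of $\scrU_\F \otimes \scrU_\F(h)$ and $\scrU_\F \otimes \scrU_\F(-H)$, with $\scrU_\F(h)$ sitting in the degree-$0$ slot.

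Finally I would read off the two cohomologies from \pref{lm:Ext1}(iii): the bundle $\scrU_\F(-H)$ is acyclic, so the degree-$1$ entry contributes nothing, while $H^\bullet(\scrU_\F(h)) \cong \bfk$. Because the second term is acyclic the differential acts trivially on cohomology, so no connecting map can interfere and $\h$ is just the cohomology of the first term. I do not anticipate a genuine obstacle here; the only point demanding care is the cohomological bookkeeping. In \pref{lm:Ext2} the surviving group $H^\bullet(\scrU_\F \otimes \scrU_\F(h)) \cong \bfk[-1]$ is concentrated in degree $1$, which combines with the degree-$0$ position in the complex to give the shifted answer $\bfk[-1]$; here $H^\bullet(\scrU_\F(h)) \cong \bfk$ is concentrated in degree $0$ and already occupies the degree-$0$ slot, so no shift occurs and the answer is the unshifted $\bfk$, as claimed.
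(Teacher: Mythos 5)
Your proposal is correct and is essentially identical to the paper's own proof: both resolve $\scrU_\F^\dual(-h)$ by the two-term complex $\lc \scrU_\V^\dual(H) \to \scrU_\V^\dual(-h) \rc$, reduce the $\hom$ to $\h \lb \lc \scrU_\F(h) \to \scrU_\F(-H) \rc \rb$, and conclude via \pref{lm:Ext1}, which gives $\bfk$ from the degree-zero term and acyclicity of $\scrU_\F(-H)$. The degree bookkeeping you spell out at the end matches the paper's (implicit) reasoning exactly.
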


\begin{proof}
One has
\begin{align}
 \hom_{\cO_\V} \lb \scrU_\F^\dual(-h), \cO_\F \rb
  &\cong \hom_{\cO_\V} \lb \lc \scrU_\V^\dual(H) \to \scrU_\V^\dual(-h) \rc, \cO_\F \rb \\
  &\cong \h \lb \lc \scrU_\F(h) \to \scrU_\F(-H) \rc \rb.
\end{align}
\pref{lm:Ext1} shows that
the first term gives $\bfk$ and the second term vanishes.
\end{proof}

\begin{lemma} \label{lm:orth2}
One has
\begin{align}
 \hom_{\cO_\V} \lb \cO_\F(H-2h), \cO_\F(h) \rb
  \cong 0.
\end{align}
\end{lemma}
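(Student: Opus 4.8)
The plan is to follow the computations of \pref{lm:orth1}, \pref{lm:Ext2} and \pref{lm:Ext3} line by line, the only difference being the line bundle one twists by. Since the source $\cO_\F(H-2h)$ is a line bundle supported on the zero-section $\F \subset \V$, the first step is to replace it by a two-term locally free resolution over $\cO_\V$. Tensoring the fundamental sequence $0 \to \cO_\V(h+H) \to \cO_\V \to \cO_\F \to 0$ by $\cO_\V(H-2h)$ yields
\[
 0 \to \cO_\V(2H-h) \to \cO_\V(H-2h) \to \cO_\F(H-2h) \to 0,
\]
so that $\cO_\F(H-2h)$ is quasi-isomorphic to the complex $\lc \cO_\V(2H-h) \to \cO_\V(H-2h) \rc$ as an $\cO_\V$-module.

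The second step is to apply $\hom_{\cO_\V} \lb -, \cO_\F(h) \rb$ to this complex. Using $\hom_{\cO_\V} \lb \cO_\V(D), \cO_\F(h) \rb \cong \h \lb \cO_\F(h-D) \rb$ for a line bundle $\cO_\V(D)$, exactly as in the earlier lemmas, this reduces the problem to the cohomology of a two-term complex on $\F$:
\[
 \hom_{\cO_\V} \lb \cO_\F(H-2h), \cO_\F(h) \rb
  \cong \h \lb \lc \cO_\F(3h-H) \to \cO_\F(2h-2H) \rc \rb,
\]
where the first term is $\hom_{\cO_\V} \lb \cO_\V(H-2h), \cO_\F(h) \rb$ and the second is $\hom_{\cO_\V} \lb \cO_\V(2H-h), \cO_\F(h) \rb$.

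The final step is to read off the vanishing from \pref{lm:Ext1}: the line bundle $\cO_\F(3h-H)$ is acyclic by part (i) (the case $t=3$ of $\cO_\F(th-H)$), and $\cO_\F(2h-2H)$ is acyclic by part (ii). Both terms of the complex therefore vanish, hence so does its cohomology, and the Hom group is zero. I do not expect a genuine obstacle here: no vanishing beyond what \pref{lm:Ext1} already records is needed. The only point demanding care is the bookkeeping of the twists, namely checking that resolving $\cO_\F(H-2h)$ and pairing against $\cO_\F(h)$ lands precisely on the two weights $3h-H$ and $2h-2H$ that \pref{lm:Ext1} controls.
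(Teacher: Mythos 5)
Your proposal is correct and follows exactly the paper's own proof: resolve $\cO_\F(H-2h)$ by the twisted fundamental sequence $0 \to \cO_\V(2H-h) \to \cO_\V(H-2h) \to \cO_\F(H-2h) \to 0$, apply $\hom_{\cO_\V}(-,\cO_\F(h))$ to reduce to the complex $\lc \cO_\F(3h-H) \to \cO_\F(2h-2H) \rc$, and conclude by the acyclicity statements in \pref{lm:Ext1} (parts (i) and (ii)). Your bookkeeping of the twists is accurate, and in fact more careful than the paper's, which misprints the final two line bundles as sheaves on $\V$ rather than on $\F$.
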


\begin{proof}
One has
\begin{align}
 \hom_{\cO_\V} \lb \cO_\F(H-2h), \cO_\F(h) \rb
  &\cong \hom_{\cO_\V} \lb \lc \cO_\V(2H-h) \to \cO_\V(H-2h) \rc, \cO_\F(h) \rb \\
  &\cong \h \lb \lc \cO_\V(3h-H) \to \cO_\V(2h-2H) \rc \rb,
\end{align}
which vanishes by \pref{lm:Ext1}.
\end{proof}

\section{Derived equivalence by mutation}

Recall from \cite{1611.08386} that
\begin{align} \label{eq:EC1}
 \D(\G) = \la \cO_\G(-H), \scrU, \cO_\G, \scrU^\dual, \cO_\G(H), \scrU^\dual(H) \ra
\end{align}
and
\begin{align} \label{eq:EC2}
 \D(\Q) = \la \cO_\Q(-3h), \cO_\Q(-2h), \cO_\Q(-h), \scrS, \cO_\Q, \cO_\Q(h) \ra.
\end{align}
It follows from \cite{MR1208153} that
\begin{align} \label{eq:SOD1}
 \D(\V) = \la \iota_* \varpi_+^* \D(\G), \Phi_+(\D(\V_+)) \ra
\end{align}
and
\begin{align} \label{eq:SOD2}
 \D(\V) = \la \iota_* \varpi_-^* \D(\Q), \Phi_-(\D(\V_-)) \ra,
\end{align}
where
\begin{align}
 \Phi_+ \coloneqq \phi_+^*(-) \otimes \cO_\V(h) \colon \D(\V_+) \to \D(\V)
\end{align}
and
\begin{align}
 \Phi_- \coloneqq \phi_-^*(-) \otimes \cO_\V(H) \colon \D(\V_-) \to \D(\V).
\end{align}
\pref{eq:EC1} and \pref{eq:SOD1} gives
\begin{align}
 \D(\V) = \la \cO_\F(-H), \scrU_\F, \cO_\F, \scrU_\F^\dual, \cO_\F(H), \scrU_\F^\dual(H), \Phi_+(\D(\V_+)) \ra.
\end{align}
By mutating $\Phi_+(\D(\V_+))$ two steps to the left,
one obtains
\begin{align}
 \D(\V) = \la \cO_\F(-H), \scrU_\F, \cO_\F, \scrU_\F^\dual, \Phi_1(\D(\V_+)), \cO_\F(H), \scrU_\F^\dual(H) \ra
\end{align}
where
\begin{align}
 \Phi_1 \coloneqq \L_{\la \cO_\F(H), \scrU_\F^\dual(H) \ra} \circ \Phi_+.
\end{align}
By mutating the last two terms to the far left,
one obtains
\begin{align}
 \D(\V) = \la \cO_\F(-h), \scrU_\F^\dual(-h), \cO_\F(-H), \scrU_\F, \cO_\F, \scrU_\F^\dual, \Phi_1(\D(\V_+)) \ra,
\end{align}
since $\omega_\V \cong \cO_\V(-h-H)$.
\pref{lm:orth1} allows one to move $\cO_\F(-H)$ to the far left
without affecting other objects:
\begin{align}
 \D(\V) = \la \cO_\F(-H), \cO_\F(-h), \scrU_\F^\dual(-h), \scrU_\F, \cO_\F, \scrU_\F^\dual, \Phi_1(\D(\V_+)) \ra.
\end{align}
By mutating $\scrU_\F$ one step to the left and using \pref{pr:S} and \pref{lm:Ext2},
one obtains
\begin{align}
 \D(\V) = \la \cO_\F(-H), \cO_\F(-h), \scrS_\F, \scrU_\F^\dual(-h), \cO_\F, \scrU_\F^\dual, \Phi_1(\D(\V_+)) \ra.
\end{align}
By mutating $\cO_\F(-H)$ to the far right,
one obtains
\begin{align}
 \D(\V) = \la \cO_\F(-h), \scrS_\F, \scrU_\F^\dual(-h), \cO_\F, \scrU_\F^\dual, \Phi_1(\D(\V_+)), \cO_\F(h) \ra.
\end{align}
By mutating $\Phi_1(\D(\V_+))$ to the right,
one obtains
\begin{align}
 \D(\V) = \la \cO_\F(-h), \scrS_\F, \scrU_\F^\dual(-h), \cO_\F, \scrU_\F^\dual, \cO_\F(h), \Phi_2(\D(\V_+)) \ra
\end{align}
where
\begin{align}
 \Phi_2 \coloneqq \R_{\cO_\F(h)} \circ \Phi_1.
\end{align}
By mutating $\scrU_\F^\dual(-h)$ one step to the right
and using \pref{lm:Ext3} and \pref{eq:Ext4},
one obtains
\begin{align}
 \D(\V) = \la \cO_\F(-h), \scrS_\F, \cO_\F, \cO_\F(H-2h), \scrU_\F^\dual, \cO_\F(h), \Phi_2(\D(\V_+)) \ra.
\end{align}
Similarly,
by mutating $\scrU_\F^\dual$ one step to the right,
one obtains
\begin{align}
 \D(\V) = \la \cO_\F(-h), \scrS_\F, \cO_\F, \cO_\F(H-2h), \cO_\F(h), \cO_\F(H-h), \Phi_2(\D(\V_+)) \ra.
\end{align}
\pref{lm:orth2} allows one to exchange
$\cO_\F(H-2h)$ and $\cO_\F(h)$
to obtain
\begin{align}
 \D(\V) = \la \cO_\F(-h), \scrS_\F, \cO_\F, \cO_\F(h), \cO_\F(H-2h), \cO_\F(H-h), \Phi_2(\D(\V_+)) \ra.
\end{align}
By mutating $\Phi_2(\D(\V_+))$ two steps to the left,
one obtains
\begin{align}
 \D(\V) = \la \cO_\F(-h), \scrS_\F, \cO_\F, \cO_\F(h), \Phi_3(\D(\V_+)), \cO_\F(H-2h), \cO_\F(H-h) \ra
\end{align}
where
\begin{align}
 \Phi_3 \coloneqq \L_{\la \cO_\F(H-2h), \cO_\F(H-h) \ra} \circ \Phi_2.
\end{align}
By mutating the last two terms to the far left,
one obtains
\begin{align} \label{eq:SOD3}
 \D(\V) = \la \cO_\F(-3h), \cO_\F(-2h), \cO_\F(-h), \scrS_\F, \cO_\F, \cO_\F(h), \Phi_3(\D(\V_+)) \ra.
\end{align}
By comparing \pref{eq:SOD3}
with 
\begin{align} \label{eq:SOD4}
 \D(\V) = \la \cO_\F(-3h), \cO_\F(-2h), \cO_\F(-h), \scrS_\F, \cO_\F, \cO_\F(h), \Phi_-(\D(\V_-)) \ra
\end{align}
obtained by combining \pref{eq:EC2} and \pref{eq:SOD2},
one obtains a derived equivalence
\begin{align} \label{eq:Phi}
 \Phi \coloneqq \Phi_-^! \circ \Phi_3 \colon \D(\V_+) \simto \D(\V_-),
\end{align}
where
\begin{align}
 \Phi_-^!(-) \coloneqq (\phi_-)_* \lb (-) \otimes \cO_\V(-H) \rb
  \colon \D(\V) \to \D(\V_-)
\end{align}
is the left adjoint functor of $\Phi_-$.
Note that the left mutation along an exceptional object $\cE \in \D(\V)$
is an integral functor
$
 \Phi_\cK(-) \coloneqq (p_2)_* \lb p_1^*(-) \otimes \cK \rb
$
along the diagram
\begin{align}
\begin{gathered}
\xymatrix{
 & \V \times_{\V_0} \V \ar[dl]_{p_1} \ar[dr]^{p_2} \\
 \V && \V 
}
\end{gathered}
\end{align}
whose kernel $\cK$ is the cone over the evaluation morphism
$
 \ev \colon \cE^\dual \boxtimes \cE \to \Delta_\V.
$
The functors
$
 \Phi_+ \colon \D(\V_+) \to \D(\V)
$
and
$
 \Phi_-^! \colon \D(\V) \to \D(\V_-)
$
are clearly an integral functor,
so that the functor \pref{eq:Phi} is also
an integral functor,
whose kernel is an object of $\D(\V_+ \times_{\V_0} \V_-)$
obtained by convolution.

\begin{comment}
Note that everything is linear over $R_0$.
Any integral functor along any diagram of the form
\begin{align}
\begin{gathered}
\xymatrix{
 & W \ar[dl]_{p_+} \ar[dr]^{p_-} \\
 \V_+ && \V_- 
}
\end{gathered}
\end{align}
can be written as an integral functor along
\begin{align}
\begin{gathered}
\xymatrix{
 & \V_+ \times_{\V_0} \V_- \ar[dl]_{q_+} \ar[dr]^{q_-} \\
 \V_+ && \V_- 
}
\end{gathered}
\end{align}
since one has a diagram
\begin{align}
\begin{gathered}
\xymatrix{
 & W \ar[ddl]_{p_+} \ar[ddr]^{p_-} \ar[d]^r \\
 & \V_+ \times_{\V_0} \V_- \ar[dl]_{q_+} \ar[dr]^{q_-} \\
 \V_+ && \V_- 
}
\end{gathered}
\end{align}
and
\begin{align}
 (p_-)_* (p_+^*(-) \otimes \cK)
  &\cong (q_-)_* r_* \lb r^* q_+^*(-) \otimes \cK \rb \\
  &\cong (q_-)_* \lb q_+^*(-) \otimes r_* \cK \rb.
\end{align}
\end{comment}

\section{Matrix factorizations}

Let $s_+$ be a general section of the equivariant vector bundle
$\cE_{(1,1)}$ on $\G$.
The zero $X_+$ of $s_+$ is a smooth projective Calabi--Yau 3-fold.
Since $\V_+$ is the total space of the dual bundle
$\cE_{(1,1)}^\dual$ on $\G$,
the space of regular functions on $\V_+$
which are linear along the fiber can naturally be identified
with the space of sections of $\cE_{(1,1)}$.
We write the regular function on $\V_+$
associated with $s_+ \in H^0 \lb \cE_{(1,1)} \rb$
as $\varsigma_+ \in H^0 \lb \cO_{\V_+} \rb$.
The zero $D_+$ of $\varsigma_+$ is the union
of a line sub-bundle of $\V_+$
and the inverse image of $X_+$
by the structure morphism $\pi_+ \colon \V_+ \to \G$.
The singular locus of $D_+$ is given by $X_+$.

Let $\varsigma_-$ be a regular function on $\V_-$
corresponding to $\varsigma_+$
under the isomorphism
$
 H^0 \lb \cO_{\V_+} \rb
  \cong H^0 \lb \cO_{\V_0} \rb
  \cong H^0 \lb \cO_{\V_-} \rb
$
given by the diagram in \pref{eq:blow-up_diagram},
and $X_-$ be the zero of the corresponding section
$
 s_- \in H^0 \lb \cF_{(1,1)} \rb,
$
which is a smooth projective Calabi--Yau 3-fold in $\Q$.

The push-forward of the kernel of $\Phi$ on $\V_+ \times_{V_0} \V_-$
to $\V_+ \times_{\bA^1} \V_-$
gives a kernel of $\Phi$ on $\V_+ \times_{\bA^1} \V_-$.
By taking the base-change along the inclusion $0 \to \bA^1$ of the origin
and applying \cite[Proposition 2.44]{MR2238172},
one obtains an equivalence
$
 \Phi_0 \colon \D(D_+) \cong \D(D_-)
$
of the bounded derived categories of coherent sheaves.
By using either of the characterization of perfect complexes
as \emph{homologically finite} objects
(i.e., objects whose total $\Ext$-groups with any other object
are finite-dimensional)
or \emph{compact} objects
(i.e., objects such that the covariant functors represented by them
commute with direct sums),
one deduces that $\Phi_D$ preserves perfect complexes,
so that it induces an equivalence
$
 \Phi_0^\sing \colon \Dsg(D_+) \cong \Dsg(D_-)
$
of singularity categories
(see \cite[Section 7]{MR2563433} and
\cite[Theorem 1.1]{MR2593258}).

Recall that $\V_+$, $\V_-$ and $\V_0$ are geometric invariant theory quotient
of $\Spec R$
by the anti-diagonal $\bGm$-action.
The residual diagonal $\bGm$-action on both $\V_+$ and $\V_-$ are
dilation action on the fiber.
The equivalences $\Phi$, $\Phi_0$ and $\Phi_0^\sing$
are equivariant with respect to this $\bGm$-action,
and induces an equivalence of $\bGm$-equivariant categories
\cite[Theorem 1.1]{1506.00177},
which will be denoted by the same symbol by abuse of notation.
Now \cite[Theorem 3.6]{MR3071664} gives equivalences
\begin{align} \label{eq:Isik1}
 \Dsg([D_+/\bGm]) \cong \D(X_+)
\end{align}
and
\begin{align} \label{eq:Isik2}
 \Dsg([D_-/\bGm]) \cong \D(X_-)
\end{align}
between $\bGm$-equivariant singularity categories
and derived categories
of coherent sheaves
(see also \cite{MR2982435}
where the case of line bundles is discussed
independently and around the same time as \cite{MR3071664}).
By composing these derived equivalences
with $\Phi_0^\sing$,
one obtains a derived equivalence
between $X_+$ and $X_-$.
It is an interesting problem to compare this equivalence
with the one obtained in \cite{1611.08386}.
Another interesting problem is to prove the derived equivalence
using variation of geometric invariant theory quotient
along the lines of \cite{0803.2045,MR2795327,1203.6643,MR3327537,MR3370126},
and use it to produce autoequivalences of the derived category
\cite{MR3223878,MR3552550}.

\bibliographystyle{amsalpha}
\bibliography{bibs}

\end{document}